\numberwithin{equation}{section}
\newtheorem{theorem}{Theorem}[section]
\newtheorem{lemma}[theorem]{Lemma}
\newtheorem{corollary}[theorem]{Corollary}
\theoremstyle{definition}
\newtheorem{definition}[theorem]{Definition}
\newtheorem{example}[theorem]{Example}
\newtheorem{remark}[theorem]{Remark}
\newcommand\<{\langle}
\renewcommand\>{\rangle}
\newcommand\CC{{\mathbb C}}
\newcommand\NN{{\mathbb N}}
\newcommand\RR{{\mathbb R}}
\newcommand\ZZ{{\mathbb Z}}
\newcommand\PP{{\mathbb P}}
\newcommand\del{\partial}
\newcommand{\Aut}{\operatorname{Aut}}
\newcommand\conv{{\rm conv}}
\newcommand\vol{{\rm vol}}
\begin{document}
\mbox{}
\title{On transformations of $A$-hypergeometric functions}

\author{Jens~Forsg{\aa}rd}
\address{Department of Mathematics \\
Texas A\&M University \\ College Station, TX 77843.}
\email{jensf@math.tamu.edu}

\author{Laura Felicia Matusevich}
\address{Department of Mathematics \\
Texas A\&M University \\ College Station, TX 77843.}
\email{laura@math.tamu.edu}

\author{Aleksandra Sobieska}
\address{Department of Mathematics \\
Texas A\&M University \\ College Station, TX 77843.}
\email{ola@math.tamu.edu}

\thanks{LFM was partially supported by NSF grants DMS 1001763 and DMS 1500832.}
\subjclass[2010]{Primary: 33C70, 32A17; Secondary: 14M25}

\begin{abstract}
We propose a systematic study of transformations of
$A$-hypergeometric functions. 
Our approach is to apply changes of variables
corresponding to automorphisms of toric rings, to
Euler-type integral representations of $A$-hypergeometric functions.
We show that all linear $A$-hypergeometric transformations arise from
symmetries of the corresponding polytope. As an application of the
techniques developed here, we show that
the Appell function $F_4$ does not admit a certain kind of Euler-type integral
representation.
\end{abstract}

\keywords{Hypergeometric functions, Euler type integrals,
  Hypergeometric transformations.}

\maketitle

\section{Introduction}

Hypergeometric functions are among the most extensively studied
mathematical functions. The archetypal hypergeometric function is the
\emph{Gauss hypergeometric series}:
\[
{}_2F_1(a,b;c;x) = \sum_{n=0}^\infty \frac{a(a+1)\cdots(a+n-1) \cdot
  b(b+1)\cdots(b+n-1)}{n! \cdot c(c+1)\cdots(c+n-1)} \, x^n, \quad |x|<1,
\]
where $a,b,c \in \CC$ are considered parameters, and $c \notin
\ZZ_{\leq 0}$.

Transformations are one of the characteristic phenomena exhibited by
hypergeometric functions. Among the earliest noted ones are \emph{Pfaff's
transformations}:
\[
{}_2F_1(a,b;c;x) = (1-x)^{-b}{}_2F_1\bigg(b,c-a;c;\frac{x}{x-1}\bigg) = (1-x)^{-a}{}_2F_1\bigg(a,c-b;c;\frac{x}{x-1}\bigg),
\]
which are valid when $|x|<1/2$, and which can be used to explicitly
analytically continue the Gauss hypergeometric series.  We point out
that Pfaff's transformations can be proved by a change of variables in \emph{Euler's integral}
\begin{align*}
B(b,c-b) {}_2F_1(a,b;c;x) &= \int_0^1 z^{b-1}\,(1-z)^{c-b-1}\, (1-xz)^{-a}\,dz \\
&= \int_0^\infty z^{c-b-1}\,(z+1)^{a-c}\,(z+1-x)^{-a}\, dz,
\end{align*}
valid for ${\rm Re}(c) > {\rm Re}(b) > 0$ and $|x|<1$,
and where $B$ denotes the \emph{beta function}.

On the one hand, transformation formulas are abundant in the hypergeometric literature,
see, e.g., reference works such as~\cite{AAR99, EMOT53,
  nist}. Of particular note is the work of Vid{\=u}nas, 
culminating in~\cite{Vid}, which
classifies algebraic transformations for the Gauss hypergeometric
function. On the other hand, the known hypergeometric transformations mostly
involve only a few of the most classical families of hypergeometric
functions, essentially those named after Gauss, Appell, and Lauricella.

In the late 1980s Gelfand, Graev, Kapranov, and Zelevinsky~\cite{GGZ,GKZ} introduced
a way of studying multivariate hypergeometric functions, which at once unified
and vastly generalized them. The goal of this article is to study
transformations in the more general context of the \emph{$A$-hypergeometric
  functions} introduced by Gelfand, Graev, Kapranov, and
Zelevinsky. (See Section~\ref{ssec:A-hypergeometric} for details, and
in particular, Example~\ref{ex:classical} for the relationship between the classical
and $A$-hypergeometric functions.)

Let $\{F_k(\beta; x)\}_{k\in I}$, where $I\subset \ZZ$, be a family of hypergeometric
functions in the variables $x$ depending on parameters $\beta$. 
Loosely speaking, a \emph{transformation} of hypergeometric functions
is an identity involving the functions
\[
F_k(\beta T_k; \varphi_k(x)), \quad k \in I
\]
where $T_k$ is an affine function and $\varphi_k$ is an algebraic function
for all $k\in I$.
Such an identity is not necessarily given by a closed formula, and is
allowed to involve elementary functions of $x$ and $\beta$
as coefficients.
Needless to say, the task of classifying \emph{all} hypergeometric
transformations, even for the most classical hypergeometric functions
in one or two variables, is probably out of 
reach. 

The advantage of working in the context of $A$-hypergeometric
functions, where tools from combinatorics, toric geometry, $D$-module
theory and combinatorial commutative algebra are available, is that a
systematical, unified study may be undertaken. This way, we may
attempt to
understand which kinds of transformations are valid in the most
general contexts, and which are valid only for specific families of
hypergeometric functions satisfying additional (combinatorial) properties.

Just as is the case for Pfaff's transformations,
integral expressions for hypergeometric functions provide many of the proofs of the
classical transformation formulas. Here we use Euler-type
integrals~\cite{GKZ90,BFP} to provide transformations of
$A$-hypergeometric functions. To aid our purposes,
we introduce in~\eqref{eqn:IntegralRepresentation} a more symmetric
version of these integrals, 
which has not appeared before.

We point out that in order to apply changes of variables to Euler-type
integrals so as to produce transformations, both the integrand and the
cycle of integration need to be carefully controlled. A challenge in
using the integrals of~\cite{GKZ90} is that the (compact) cycles used there are not
explicitly constructed. Even when using the explicit cycle from
\cite{Beu-alg}, it is difficult to determine which
changes of variables preserve it. On the other hand, the cycles
used in~\cite{BFP} are essentially orthants, and thus easier to
control, with the drawback, however, of requiring stronger assumptions
on the integrand in order to achieve convergence than in the case of
compact cycles.

As can be seen from the previous paragraph, we consider individual
Euler-type integrals, and study their transformations. On the other
hand, $A$-hypergeometric functions are defined as solutions of
\emph{$A$-hypergeometric systems of differential equations}. If the
parameters are sufficiently generic, it is known~\cite{GKZ90,Beu,sw-irred} that such systems
have irreducible monodromy representation. As such, if a single
$A$-hypergeometric function satisfies a given transformation, by
monodromy transformations, a basis of such solutions may be obtained
that satisfy the transformation as well, with the proviso that some
coefficients may appear in order to account for choices of branches.
This provides transformations for any $A$-hypergeometric function (not
necessarily the original transformation due to the aforementioned coefficients).

Regarding classical hypergeometric transformations, we observe
that not all of these are proved by straight change of variables in an
integral representation of the corresponding function. See, e.g.,
the quadratic and higher order transformations for the Gauss
hypergeometric function. While
these transformations are valid for their $A$-hypergeometric
counterparts, we have not been able to derive them using the
techniques developed here.

This article is organized as follows.
In Section~\ref{sec:preliminaries}, we introduce $A$-hypergeometric
functions and their Euler-type integral representations. 
In Section~\ref{sec:automorphisms}, we discuss how certain changes of
variables induce transformations of $A$-hypergeometric functions.
In Section~\ref{sec:Symmetries}, we study changes of variables and transformations induced
by symmetries of the polytope underlying a given $A$-hypergeometric
systems, and show that all linear transformations of
$A$-hypergeometric functions arise from such symmetries.
In Section~\ref{sec:ElementaryAutomorphisms} we briefly consider
transformations induced by automorphisms of the monoid ring which
defines an $A$-hypergeometric system.
In Section~\ref{sec:AppellF4} we use the techniques developed in this
article to show that the Appell function $F_4$ does not admit an
Euler-type integral representation where the cycle of integration is a
rotation of the positive orthant.

Throughout this article $\NN$ denotes the set $\{0, 1,2,\dots\}$.

\section{The $A$-hypergeometric system and Euler type integrals.}
\label{sec:preliminaries}

\subsection{The $A$-hypergeometric system}
\label{ssec:A-hypergeometric}

We set 
\begin{equation}\label{eq:A}
 A =  \begin{bmatrix}
                    \,a_1 & a_2 & \dots & a_n\,
      \end{bmatrix} \in \ZZ^{d\times n}
\end{equation}
where $a_1,\dots, a_n \in \ZZ^{d}$. We assume, firstly, that the columns of $A$
span $\ZZ^d$ as a lattice and, secondly, that the all ones vector lies in the
rowspan of $A$.
Thus, $A$ has rank $d$, and there exists a vector $\xi \in \NN^{d}$ such that
\begin{equation}
\label{eqn:xi}
\xi A = (1, \dots, 1).
\end{equation}
We denote by $z$ elements in the torus $(\CC^*)^{d}$ or 
indeterminates
in the coordinate ring of this torus.

\begin{definition}
\label{def:AHypergeometric}
Let A be 
as in \eqref{eq:A}, and let $\beta \in \CC^{d}$. The \emph{$A$-hypergeometric system with parameter
  $\beta$}, denoted $H_A(\beta)$, is the following system of
linear partial differential equations:
\begin{alignat*}{2}
\del^u F- \del^v F&= 0& \qquad & \text{for all } u,v \in \NN^n \text{ such
  that } Au=Av; \\
\sum_{j=1}^n a_{ij} x_j \del_j F & = \beta_i F& & \text{for } i=1,\dots,d.
\end{alignat*}
\end{definition}

The matrix $A$ defines a projective toric variety, namely the closure
in $\PP^{n-1}$ of the image of the map $(\CC^*)^{d}\to (\CC^*)^n$
given by $z \mapsto (z^{a_1},\dots,z^{a_n})$. We refer to
this as the \emph{toric variety underlying the hypergeometric system
$H_A(\beta)$}. The coordinate ring of this variety is the semigroup
ring $\CC[\NN A]$, where $\NN A$ is the semigroup (actually, monoid)
of nonnegative integer combinations of the columns of $A$.
 
We identify the space $\CC^A = \{ (x_1, \dots, x_n) \mid x_1,\dots,x_n \in
\CC\}$ with the family of polynomials 
\begin{equation}\label{eqn:f}
\bigg\{ f(z) = \sum_{j=1}^{n} x_j\,z^{a_{j}} \,\bigg|\, (x_1,\dots,x_n)
\in \CC^n \bigg\} 
\end{equation}
where $z=(z_1,\dots,z_d)$. Note that we deviate from the
standard approach: the Newton polytope (i.e. the convex hull of the
exponent vectors of the monomials) of a polynomial $f \in \CC^A$ is at most $(d-1)$-dimensional.
Indeed, the linear form $\xi$ encodes the quasi-homogeneity
\[
f\left(\lambda^\xi z\right) = \lambda f(z),
\]
where $\lambda^\xi z = (\lambda^{\xi_1} z_1, \dots, \lambda^{\xi_d} z_d)$.
It is common in the literature to reduce the number of variables by removing this homogeneity,
and in effect consider $f$ as a $(d-1)$-variate polynomial. 

\begin{example}
\label{ex:classical}
Classical hypergeometric functions must be suitably homogenized to fit 
in the $A$-hypergeometric setting. Let us give some information on how to realize some of the
more important classical hypergeometric functions in an
$A$-hypergeometric way. Since only special matrices $A$ are
involved, we note that the $A$-hypergeometric setting is indeed more general than the
classical setting.

It can be shown that, when $A$ and $\beta$ are given by
\[
A= \begin{bmatrix} 1 & 0 & 0 & -1 \\ 0 & 1 & 0 & \phantom{-}1 \\ 0 & 0 & 1
  & \phantom{-}1 \end{bmatrix},
\quad\text{and} \quad
\beta =\begin{bmatrix} c-1 \\ -a \\ -b \end{bmatrix},
\]
then a function $F(x)$ is a solution of the Gauss hypergeometric
equation if and only if the function $x_1^{c-1}x_2^{-a}x_3^{-b}F(x_1x_4/x_2x_3)$ is
a solution of $H_A(\beta)$.

In a similar way, the hypergeometric functions ${}_pF_{p-1}$ can be
obtained using a $(2p-1)\times 2p$ matrix $A$ consisting of a
$(2p-1)\times(2p-1)$ identity matrix to which we adjoin an additional column
whose entries are $\pm 1$ in such a way that the kernel of $A$ is
spanned by $(1,\dots,1,-1,\dots,-1)$ with $p$ ones and $p$ negative ones.

For the Appell functions, each of $F_1$ and $F_4$ arises from a single
$4\times 6$ matrix $A$ (see the proof of Theorem~\ref{thm:AppellF4}
for more details on $F_4$), while each of $F_2$ and $F_3$ correspond to a $5
\times 7$ matrix $A$. 

More generally, the $m$-variate Lauricella function $F_A$ corresponds
to a matrix $A$ of size $(2m+1)\times(3m+1)$, the function $F_B$ corresponds
to a matrix $A$ of size $(2m+1)\times(3m+1)$, the function $F_C$ corresponds to a
matrix $A$ of size $(m+2)\times (2m+2)$ (given explicitly in Example~\ref{ex:LauricellaFC}) and,
finally, the function $F_D$ corresponds to a
matrix $A$ of size $(m+2)\times (2m+2)$.
\end{example}

\subsection{Euler type integrals}

In this article, a central role is played by solutions of $H_A(\beta)$
representable by an integral
\begin{equation}
 \label{eqn:IntegralRepresentation}
 F_\sigma(\beta; x) = \int_\sigma z^{-\beta}\,f(z)^{\<\xi, \beta\>}\,d\eta,
\end{equation}
where $f$ is as in \eqref{eqn:f} and $\xi$ is as in \eqref{eqn:xi}.
The relationship between $A$-hypergeometric functions
and toric geometry is here manifest through the Haar measure
\[
d\eta = \frac{dz_1}{z_1}\wedge\dots \wedge \frac{dz_d}{z_d}
\]
of the complex torus $(\CC^*)^{d}$.
We discuss the multivaluedness of \eqref{eqn:IntegralRepresentation}
in \S~\ref{ssec:multivaluedness}.
In order for the integral~\eqref{eqn:IntegralRepresentation} to be
well defined, we need the $d$-cycle $\sigma$
to avoid the singular locus of the integrand. That is, we require that
$\sigma \subset (\CC^*)^{d}\setminus V(f)$, where $V(f)$ denotes the
zero locus of the polynomial $f$.
The inclusion $\sigma \subset (\CC^*)^{d}\setminus V(f)$ is also valid if we
perturb the coefficients of the polynomial $f$. Hence, the
integral~\eqref{eqn:IntegralRepresentation} defines a germ of a
meromorphic function 
in $x$, under the assumption that it converges.

As we do not follow the standard approach, we provide a proof of the following theorem.
\begin{theorem}[{\emph{almost} \cite[Theorem 2.7]{GKZ90}}]
\label{thm:AHypergeometric}
Provided sufficient convergence properties, the integral
\eqref{eqn:IntegralRepresentation} defines a germ of an $A$-hypergeometric
function in the variables $x$.
\end{theorem}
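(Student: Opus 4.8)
The plan is to verify directly that the function $F_\sigma(\beta;x)$ defined by \eqref{eqn:IntegralRepresentation} is annihilated by the two families of operators appearing in Definition~\ref{def:AHypergeometric}: the toric operators $\del^u-\del^v$ with $Au=Av$, and the Euler operators $\sum_{j} a_{ij}x_j\del_j-\beta_i$. Write $m=\<\xi,\beta\>$ and view the integrand $g(x,z)=z^{-\beta}f(z)^m$ as a function of $x$ (holomorphic on the domain of convergence) valued in multivalued functions of $z$ on $(\CC^*)^d\setminus V(f)$. The hypothesis ``sufficient convergence properties'' will be invoked exactly to differentiate under the integral sign and to discard a boundary term in Stokes' theorem.

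For the toric operators I would first record that $\del_{x_j}f(z)=z^{a_j}$, whence an easy induction gives, for every $u\in\NN^n$,
\[
\del^u g \;=\; z^{-\beta}\,m(m-1)\cdots(m-|u|+1)\,f(z)^{m-|u|}\,z^{Au},
\]
where $|u|=\sum_j u_j$ and $z^{Au}$ means $z$ raised to the vector $\sum_j u_j a_j\in\ZZ^d$. The crucial observation is that $Au=Av$ forces $|u|=|v|$: applying $\xi$ and using \eqref{eqn:xi}, $|u|=(1,\dots,1)\cdot u=\xi A u=\xi A v=(1,\dots,1)\cdot v=|v|$. Hence $\del^u g=\del^v g$ identically, and integrating over $\sigma$ yields $\del^u F_\sigma=\del^v F_\sigma$.

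For the Euler operators the idea is to convert $x$-derivatives into logarithmic $z$-derivatives. Since $z_i\del_{z_i}z^{a_j}=a_{ij}z^{a_j}$, we have $z_i\del_{z_i}f(z)=\sum_j a_{ij}x_j z^{a_j}$, so that $\sum_j a_{ij}x_j\del_{x_j}g=z^{-\beta}\,m f^{m-1}\,z_i\del_{z_i}f=z^{-\beta}\,z_i\del_{z_i}(f^m)$. Combining this with $z_i\del_{z_i}z^{-\beta}=-\beta_i z^{-\beta}$ gives the pointwise identity
\[
\sum_{j} a_{ij}x_j\,\del_{x_j}g \;=\; z_i\,\del_{z_i}g \;+\; \beta_i\,g .
\]
Integrating against $d\eta$, the left-hand side becomes $\sum_j a_{ij}x_j\del_{x_j}F_\sigma$, the last term becomes $\beta_i F_\sigma$, and the middle term vanishes: indeed $z_i\del_{z_i}g\,d\eta=\pm\, d\bigl(g\,\tfrac{dz_1}{z_1}\wedge\cdots\wedge\widehat{\tfrac{dz_i}{z_i}}\wedge\cdots\wedge\tfrac{dz_d}{z_d}\bigr)$ is exact (as a section of the twisted de Rham complex attached to the branches of $z^{-\beta}f^m$), so its integral over the cycle $\sigma$ is zero by Stokes' theorem, provided the boundary contribution is negligible. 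This proves $\sum_j a_{ij}x_j\del_{x_j}F_\sigma=\beta_i F_\sigma$ for $i=1,\dots,d$, and together with the toric equations shows $F_\sigma$ solves $H_A(\beta)$, i.e.\ is $A$-hypergeometric.

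The purely algebraic content — the two displayed identities together with $|u|=|v|$ — is routine; the real work, and the reason the statement is only ``\emph{almost}'' \cite[Theorem 2.7]{GKZ90}, is analytic. One must pin down hypotheses on $\sigma$ and on $x$ guaranteeing (i) that $F_\sigma$ is holomorphic near $x$ with derivatives computed under the integral sign, and (ii) that Stokes' theorem applies to the multivalued $(d-1)$-form above with no boundary term. For the compact cycles of \cite{GKZ90} this is immediate; for the orthant-type cycles used here (following \cite{BFP}) it requires controlling the decay of the integrand near $V(f)$, near the coordinate hyperplanes, and towards infinity, which is precisely what we summarize as ``sufficient convergence properties''. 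I expect assembling these decay estimates — rather than the differential-equation bookkeeping — to be the main obstacle.
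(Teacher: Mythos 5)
Your proof is correct and follows essentially the same route as the paper's: the same descending-factorial computation for the toric operators (with the key observation that $Au=Av$ forces $|u|=|v|$ via $\xi$), and the same conversion of the Euler operator into a logarithmic $z_i$-derivative killed by integration by parts/Stokes. Your write-up merely makes the exactness of $z_i\del_{z_i}g\,d\eta$ and the role of the convergence hypotheses more explicit than the paper does.
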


Let us comment on the sufficient convergence properties required in Theorem~\ref{thm:AHypergeometric}. 
In order to perform the steps in the following proof, we need to interchange the order of the integration in \eqref{eqn:IntegralRepresentation} and partial derivatives with respect to parameters $\beta$. 
This interchange is allowed if, e.g., the cycle $\sigma$ is compact as in \cite{GKZ90} or, in case of a noncompact
cycle $\sigma$, provided the integrand has sufficiently rapid decay along its boundary
as in \cite{BFP}. In the latter case one might be forced to impose restrictions on the parameter
$\beta$; however, these restrictions can be handled by considering a meromorphic extension in 
the sense of Riesz and Hadamard. 
As the explicit cycles considered in this work are of the forms appearing in \cite{GKZ90} and \cite{BFP},
we refer the reader to those articles for further details on these aspects.

\begin{proof}[Proof\/ of\/ Theorem \ref{thm:AHypergeometric}]
We need to show that $F_\sigma(\beta; x)$ solves the differential equations set forward in 
Definition~\ref{def:AHypergeometric}. We consider them in order.

Firstly, let $u \in \NN^n$. Then,
\begin{equation}
\label{eqn:AHypergeometricProof1}
\partial^u F_\sigma(\beta; x) = (\<\xi, \beta\>)_{|u|}\int_\sigma z^{-\beta}\,f(z)^{\<\xi, \beta\>-|u|}\, z^{Au}\,d\eta,
\end{equation}
where $|u| = u_1 + \dots + u_n$ and $(\<\xi, \beta\>)_{|u|}$ denotes
the descending factorial $(\alpha)_m = \prod_{j=0}^{m-1}(\alpha-j)$. 
Since $u$ has only nonnegative components, we have that $|u| = \xi Au$, and hence
the right hand side of \eqref{eqn:AHypergeometricProof1} depends, in terms of $u$, only on the vector $Au$. It follows that $F_\sigma(\beta; x)$
solves the first set of equations in Definition~\ref{def:AHypergeometric}.

Secondly, we have that
\[
\sum_{j=1}^n a_{ij} x_j \del_j F_\sigma(\beta; x)  = \int_\sigma z^{-\beta}\, z_i \,f'_i(z)\, \<\xi, \beta\>\, f(z)^{\<\xi, \beta\>-1}\, d\eta = \beta_i F_\sigma(\beta; x),
\quad i = 1, \dots, d,
\]
where the last equality is obtained through integration by parts with respect to $z_i$. This is precisely
the second set of equations in Definition~\ref{def:AHypergeometric}.
\end{proof}

\begin{remark}
\label{rem:dehomogenization}
The integral \eqref{eqn:IntegralRepresentation} is a homogeneous version of the hypergeometric
integrals appearing in \cite{GKZ90} and \cite{BFP}. In those papers, the matrix $A$ was given in the form
\begin{equation}
\label{eqn:StandardA}
A = \begin{bmatrix}
	1&\cdots&1&	0&\cdots&0&	\quad& 0&\cdots&0\\ 
	0&\cdots&0&	1&\cdots&1&	\quad& 0&\cdots&0\\
\vdots&\ddots&\vdots&	\vdots&\ddots&\vdots& \hspace{-2mm} \ddots\hspace{-2mm} 
& \vdots&\ddots&\vdots\\
	0&\cdots&0& 0&\cdots&0&\quad& 1&\cdots&1\\
	a_{11}&\cdots&a_{1n_1} & 
		a_{21}&\cdots&a_{2n_2}  & 
		\quad & a_{m_1}&\cdots&a_{mn_m}
\end{bmatrix},
\end{equation}
where $n = n_1 + \dots + n_m$ and $a_{ij} \in \ZZ^{r}$. Thus, $d = m + r$. The existence of $\xi$
ensures that $A$ can always be written in this form with $m \geq 1$. Notice that,
with $A$ as in \eqref{eqn:StandardA}, 
\[
\xi = (\underbrace{1,  \dots, 1}_{m \text{ times}}, \underbrace{0,  \dots, 0}_{r \text{ times}}).
\]
It holds that $\CC^A \simeq \prod \CC^{A_i}$, where $i = 1, \dots, m$, and where $\CC^{A_i}$ denotes the family of $r$-variate polynomials
\[
\bigg\{ f_i(w) = \sum_{j=1}^{n_i} x_{ij}\,w^{a_{ij}} \,\bigg|\, (x_{i1},\dots,x_{in_i})
\in \CC^{n_i} \bigg\},
\]
where $w = (w_1, \dots, w_r)$. 
The Euler-type hypergeometric integral considered in, e.g., \cite{GKZ90} and \cite{BFP} 
takes the form
\[
M_\varepsilon(\beta; x) = \int_\varepsilon w_1^{-\beta_{m+1}}\cdots w_r^{-\beta_{m+r}}\prod_{i=1}^m f_i(w)^{\beta_i}\, d\eta.
\]
where $\varepsilon$ is some $r$-cycle. 
Let us consider instead the integral from Definition~\ref{eqn:IntegralRepresentation},
and the change of variables $z \mapsto \varphi(z)$ defined by 
\begin{align*}
z_i &\mapsto z_i / f_{i}(w), \quad \text{for } i = 1, \dots, m, \text{ and}\\
z_i &\mapsto w_{i-m}, \quad \text{for } i = m+1, \dots, d.
\end{align*}
If $\sigma = \varphi^{-1}(\tau) \times \varepsilon$, where $\tau$ is some $m$-cycle, then
\[
F_\sigma(\beta; x) = K(\beta; \tau)\cdot M_\varepsilon(\beta; x)
\]
where
\begin{equation}
\label{eqn:DehomogenizedConstant}
K(\beta; \tau) = \int_\tau z_1^{-\beta_1}\cdots z_{m}^{-\beta_{m}}\,(z_1 + \dots + z_{m})^{\beta_1 + \dots + \beta_{m}} \, d \eta
\end{equation}
is constant with respect to $x$. We say that $F_\sigma(\beta; x)$ is the homogenized version of $M_\varepsilon(\beta; x)$
or, conversely, that $M_\varepsilon(\beta; x)$ is a dehomogenized version of $F_\sigma(\beta; x)$. In this nomenclature, the
dependence on the cycle $\tau$ is suppressed; typically one considers $\tau$ as simple as possible to ensure
convergence and nontriviality of $K(\beta; \tau)$, e.g., as the skeleton of a polydisc centered at the origin.
Throughout this text, we consider dehomogenized integrals in the examples, to emphasize
the relationship with integral representations of classical hypergeometric functions
\end{remark}

\begin{remark}
A parameter $\beta$ is called \emph{nonresonant} (with respect to $A$) if it does not
belong to any integer translate of a supporting hyperplane of a facet
of the real cone spanned by the columns of the matrix $A$. Thus,
nonresonant parameters are \emph{very generic} in the sense that they
avoid a locally finite (but infinite) collection of algebraic varieties.
It follows from Remark~\ref{rem:dehomogenization} and~\cite{GKZ90} that if $\beta$ is nonresonant,
one can always
find a basis of the solution space of $H_A(\beta)$ by considering integrals \eqref{eqn:IntegralRepresentation}
taken over $\vol(A)$-many distinct cycles $\sigma$.
\end{remark}

\subsection{Multivaluedness}
\label{ssec:multivaluedness}

An additional difficulty to overcome is the multivaluedness of the integrand in \eqref{eqn:IntegralRepresentation}.
Indeed, we need to choose branches of the exponential functions $f(z)^{\<\xi, \beta\>}$ and
$z^{-\beta}$. A change of branch alters the value of the integral by a
factor of $e^{\<\gamma, \beta\>}$, where $\gamma\in \Gamma$ for some
subgroup $\Gamma  = \Gamma(\sigma) \subset \CC^{d}$ 
depending on the cycle $\sigma$. 
(Note that the one-dimensional subspace of the solution space generated by
\eqref{eqn:IntegralRepresentation} is well-defined.)
In particular, the validity of any identity involving Euler-type integrals
depends on an appropriate choice of branches.
We mention this, as it has implications relevant for any study comparing transformations
with computations of monodromy; the coefficient $e^{\<\gamma, \beta\>}$ can appear
through the action of analytic continuation.

\begin{example}
\label{ex:multivaluedness}
Consider the $A$-hypergeometric system defined by the matrix
\[
A=\begin{bmatrix} 1 & 1 & 1\\ 0 & 1 & 2\end{bmatrix}.
\]
Let us consider a neighbourhood of a point $(x_1, x_2, x_3) \in \RR^3$ such that $x_2^2 < 4 x_1 x_3$,
and assume that $\beta = (\beta_1, \beta_2)$ is sufficiently generic
(nonresonant suffices).
Then, the solution space of the system $H_A(\beta)$ is spanned by the two dehomogenized 
Euler-type integrals
(as in \cite{BFP})
\begin{align*}
F_1(\beta_1, \beta_2; x_1, x_2, x_3) &= \int_{\RR_+} (x_1 + x_2 z + x_3 z^2)^{\beta_1}\, z^{-\beta_2}\, d\eta\\
\text{and}\quad F_2(\beta_1, \beta_2; x_1, x_2, x_3) & = \int_{\RR_-}(x_1 + x_2 z + x_3 z^2)^{\beta_1}\, z^{-\beta_2}\, d\eta.
\end{align*}
Here $\RR_+$ and $\RR_-$ refer to the nonnegative and nonpositive real
half-lines, respectively.
By the multivaluedness of the exponential functions, 
the integrals $F_1$ and $F_2$ are well-defined only up to
a multiple of $e^{2\pi i \<k, \beta\>}$ for $k \in \ZZ^2$.
Applying the change of variables $z \mapsto 1/z$ for each of the above integrals we obtain
the transformation
\begin{align*}
F_j(\beta_1, \beta_2; x_1, x_2, x_3)  
& = F_j(\beta_1, 2\beta_1 - \beta_2; x_3, x_2, x_1), \quad \text{for } j= 1, 2.
\intertext{On the other hand the change of variables $z \mapsto e^{i\pi}z$ gives}
F_1(\beta_1, \beta_2; x_1, x_2, x_3) 
& = e^{-i\pi \beta_2}F_2(\beta_1, \beta_2; x_1, -x_2, x_3).
\intertext{Thus, applying the second identity twice, and the tranformation once, we obtain,}
F_1(\beta_1, \beta_2; x_1, x_2, x_3) 
& = e^{-i2\pi (\beta_1 + \beta_2)}F_1(\beta_1, 2\beta_1 - \beta_2; x_3, x_2, x_1).
\end{align*}
This seeming contradiction is caused by inconsistencies in the choices of branches.
The transformation is valid -- but it does not commute with meromorphic continuations.
\end{example}

\section{Changes of variables, Automorphisms, and Transformations}
\label{sec:automorphisms}

We saw in Example~\ref{ex:multivaluedness} that a change of variables
in the integral \eqref{eqn:IntegralRepresentation} can induce a transformation
of $A$-hypergeometric functions. We now consider the general situation. 
In order to deduce an identity, we need to perform a change of variables
under which the cycle $\sigma$ is invariant up to homotopy.
To simplify notation, we introduce the following assumptions.
\begin{center}
\emph{Let $\varphi\colon \sigma\rightarrow \sigma$ be a diffeomorphism such that
the toric Jacobian $J_\varphi(z)$ is nonvanishing.}
\end{center}
Let $g$ denote the pullback $\varphi^*(f)$. Then, performing the change of variables $z \mapsto \varphi(z)$
in \ref{eqn:IntegralRepresentation} we obtain
\begin{equation}
\label{eqn:ChangeOfVariables}
\int_\sigma z^{-\beta}\, f(z)^{\<\xi, \beta\>} \, d\eta
= \int_\sigma \varphi(z)^{-\beta}\, g(z)^{\<\xi, \beta\>}\, J_\varphi(z) \,d\eta.
\end{equation}
We now wish to interpret the right hand side of \eqref{eqn:ChangeOfVariables}
as an $A$-hypergeometric function. We present here only a sufficient result
in this direction. There is a flexibility in considering, for example,
special values of the parameters $\beta$, which could greatly
simplify the right hand side of \eqref{eqn:ChangeOfVariables}.

One natural requirement to enable the right hand side of \eqref{eqn:ChangeOfVariables}
to represent an $A$-hypergeometric function is that the pullback $\varphi^*$ defines
a map
\begin{equation}
\label{eqn:PullbackMap}
\varphi^*\colon \CC^A \rightarrow \CC^A.
\end{equation}
The form $\xi$ from \eqref{eqn:xi} induces a grading on the semigroup ring $\CC[\NN A]$.
We say that an automorphism of $\CC[\NN A]$ is \emph{homogeneous}
if it preserves homogeneous elements under this grading.
Then $\varphi$ is induced by a homogeneous automorphism of the semigroup ring $\CC[\NN A]$
if and only if~\eqref{eqn:PullbackMap} holds and $\varphi^*$ is a $\CC$-linear map. 
Conversely, we have the main theorem of this section.

\begin{theorem}
\label{thm:AutomorphismTransformations}
Assume that $\varphi$ is induced by a homogeneous automorphism of\/ $\CC[\NN A]$. 
Then, provided sufficient convergence,
the identity \eqref{eqn:ChangeOfVariables} encodes a transformation
of $A$-hypergeometric functions.
\end{theorem}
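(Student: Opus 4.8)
The plan is to show that when $\varphi$ is induced by a homogeneous automorphism $\Phi$ of $\CC[\NN A]$, the right-hand side of \eqref{eqn:ChangeOfVariables} is again an integral of the form \eqref{eqn:IntegralRepresentation}, for the same matrix $A$ but with transformed data; Theorem~\ref{thm:AHypergeometric} then finishes the job. First I would unpack what a homogeneous automorphism of $\CC[\NN A]$ does on the torus: $\Phi$ is determined by where it sends the generators $z^{a_j}$, and homogeneity with respect to the $\xi$-grading forces each $z^{a_j}$ to be sent to a homogeneous element of the same degree, i.e. $\Phi(z^{a_j}) = \sum_{k} c_{jk}\, z^{a_k}$ where the sum runs over those $k$ with $\langle \xi, a_k\rangle = \langle \xi, a_j\rangle$. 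Dually, $\varphi\colon(\CC^*)^d \to (\CC^*)^d$ satisfies $z^{a_j}\circ\varphi = \sum_k c_{jk}\,z^{a_k}$, and since the $z^{a_j}$ generate the coordinate ring, this is exactly the statement that $\varphi^*$ maps $\CC^A$ to $\CC^A$ by a $\CC$-linear map — which is the hypothesis already identified in the paragraph preceding the theorem. Concretely, $g = \varphi^*(f) = \sum_j x_j\,(z^{a_j}\circ\varphi)$ lies in $\CC^A$: there is a linear change of coordinates $x \mapsto x\,C$ (with $C = (c_{jk})$) so that $g(z) = \sum_k (xC)_k\, z^{a_k} = f_{xC}(z)$.

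Next I would deal with the two remaining pieces of the transformed integrand, $\varphi(z)^{-\beta}$ and the toric Jacobian $J_\varphi(z)$. Because $\Phi$ is an automorphism of the monoid ring preserving the grading, it restricts to a group automorphism of the unit group, hence acts on the character lattice $\ZZ^d$; since it must also fix the distinguished all-ones functional (this is what "homogeneous" encodes via $\xi A = (1,\dots,1)$), the action on characters is by a matrix $M \in GL_d(\ZZ)$ composed with a torus translation $z \mapsto t\cdot z$. Thus $\varphi(z)^{-\beta} = t^{-\beta}\,z^{-M^{\mathsf T}\beta}$ up to a multiplicative constant, and — this is the key compatibility — the condition that $\Phi$ preserves $\NN A$ and the $\xi$-grading forces $\xi M^{\mathsf T} = \xi$ on the relevant lattice, so $\langle \xi, M^{\mathsf T}\beta\rangle = \langle \xi, \beta\rangle$ and the exponent on $g$ is unchanged. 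Finally, the toric Jacobian of a monomial map $z\mapsto t\cdot z^{M}$ with respect to the Haar measure $d\eta$ is the constant $\det M = \pm 1$, because $d\eta$ is exactly the translation- and $GL_d(\ZZ)$-invariant form; so $J_\varphi(z)$ contributes only a unit scalar. Assembling these, the right-hand side of \eqref{eqn:ChangeOfVariables} equals $(\det M)\, t^{-\beta} \int_\sigma z^{-M^{\mathsf T}\beta}\, f_{xC}(z)^{\langle\xi,\,M^{\mathsf T}\beta\rangle}\, d\eta = (\det M)\, t^{-\beta}\, F_\sigma(M^{\mathsf T}\beta;\, xC)$, which by Theorem~\ref{thm:AHypergeometric} (under the standing convergence hypothesis, which is preserved since $\varphi$ is a diffeomorphism of $\sigma$) is an $A$-hypergeometric function in the variables $xC$. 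Equating with the left-hand side $F_\sigma(\beta; x)$ gives the asserted transformation, and I would note explicitly that multivaluedness contributes exactly a branch factor of the shape discussed in \S\ref{ssec:multivaluedness}, consistent with the phenomenon in Example~\ref{ex:multivaluedness}.

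The main obstacle I anticipate is the bookkeeping in the first two steps — precisely pinning down the structure of a homogeneous automorphism of $\CC[\NN A]$ and verifying that the induced action on the character lattice preserves $\xi$ (equivalently, that no grading shift occurs), so that the exponent $\langle\xi,\beta\rangle$ on $f$ survives the substitution intact. This is the crux: if the automorphism could rescale degrees, the pullback would not land in the same $A$-hypergeometric family. One should also be slightly careful that $\Phi$ maps $\NN A$ to $\NN A$ as a monoid (not merely $\ZZ A$ to $\ZZ A$), which is what guarantees $g = f_{xC}\in\CC^A$ rather than merely a Laurent polynomial supported on $\ZZ A$; this follows from $\Phi$ being an automorphism of the monoid ring, but deserves an explicit remark. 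The remaining computations — the form of $J_\varphi$ for a monomial map, and the invariance of $d\eta$ — are routine and I would not grind through them.
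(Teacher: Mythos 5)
Your first step --- that $g=\varphi^*(f)=f_{xC}\in\CC^A$ for a linear map $C$ on the coefficients --- is correct and matches the paper (it is the content of Lemma~\ref{lem:linearPolynomials}). The gap is in your structural claim about homogeneous automorphisms: it is \emph{not} true that every homogeneous automorphism of $\CC[\NN A]$ acts on the torus as a lattice automorphism composed with a translation, i.e.\ as $z\mapsto t\cdot z^M$ with $M\in GL_d(\ZZ)$. The unit group of $\CC[\NN A]$ is just $\CC^*$ (the semigroup is pointed), so there is no induced action on the character lattice to appeal to. Concretely, the elementary automorphisms of Section~\ref{sec:ElementaryAutomorphisms}, generated by $z_i\mapsto z_i+t\,z^a$, are homogeneous automorphisms of $\CC[\NN A]$ (for $A$ saturated, by the Bruns--Gubeladze classification cited in the paper) whose induced map on the torus is genuinely non-monomial: e.g.\ for $A=\left[\begin{smallmatrix}1&1&1\\0&1&2\end{smallmatrix}\right]$ and $\Phi(z_1z_2)=z_1z_2+z_1$ one gets $\varphi(z_2)=z_2+1$. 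For such $\varphi$ the factor $\varphi(z)^{-\beta}$ is a power of a non-monomial rational function and $J_\varphi(z)$ is a nonconstant rational function, so your closed-form conclusion $F_\sigma(\beta;x)=(\det M)\,t^{-\beta}F_\sigma(M^{\mathsf T}\beta;xC)$ simply does not hold; that formula is the content of Corollary~\ref{cor:LinearTransformation}, which covers only the \emph{monomial} homogeneous automorphisms (polytope symmetries).

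The missing idea is the paper's final step: using Lemma~\ref{lem:autFromRational} to see that each $\varphi(z_i)$, and hence $J_\varphi(z)$, is a rational function of $z$, and then expanding $\varphi(z)^{-\beta}J_\varphi(z)$ via the generalized binomial theorem as a generalized Laurent series. Each monomial term $z^{Au}$ of that series, inserted into the integral, produces an $A$-hypergeometric function with shifted parameter (compare~\eqref{eqn:AHypergeometricProof1}), so the resulting ``transformation'' is in general an infinite series identity with parameter shifts --- exactly what one sees in the examples of Section~\ref{sec:ElementaryAutomorphisms} --- rather than a single closed-form identity. You correctly identified pinning down the structure of homogeneous automorphisms as the crux, but the structure you assumed excludes precisely the cases that make the theorem (and its proof) nontrivial.
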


The requirement of sufficient convergence depends heavily on the automorphism and cycle 
in question. We refer to reader to \S\ref{sec:Symmetries}--\ref{sec:ElementaryAutomorphisms} for details. 
In this section we provide only the formal computations.

Before we prove Theorem \ref{thm:AutomorphismTransformations} we need a few auxiliary results.
We remark already at this point that by abuse of notation we identify 
$\varphi \in \Aut(\CC[\NN A])$ with its induced rational map
$\varphi\colon \CC^d \rightarrow \CC^d$,
whose existence follows from Lemma~\ref{lem:autFromRational}.

\begin{lemma}
\label{lem:linearPolynomials}
Let $\varphi$ be a homogeneous automorphism of\/ $\CC[\NN A]$. Then, 
there exist linear homogeneous polynomials
$p_j \in \CC[w_1,\dots,w_n]$, for $j= 1, \dots, n$, such that
$\varphi(z^{a_j}) = p_j(z^{a_1}, \dots, z^{a_n})$.
\end{lemma}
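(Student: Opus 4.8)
The plan is to show that a homogeneous automorphism $\varphi$ of $\CC[\NN A]$ must send each generator $z^{a_j}$ to a $\CC$-linear combination of the generators $z^{a_1},\dots,z^{a_n}$, i.e.\ to a polynomial of the form $p_j(z^{a_1},\dots,z^{a_n})$ with $p_j$ linear and homogeneous. The key observation is that the grading induced by $\xi$ assigns degree $\langle \xi, a_j\rangle = 1$ to each generator $z^{a_j}$ (this is exactly \eqref{eqn:xi}), so the generators span precisely the degree-one piece $\CC[\NN A]_1$ of the ring, and moreover a monomial $z^{m}$ for $m \in \NN A$ has $\xi$-degree $k$ if and only if $m$ is a sum of $k$ columns of $A$.

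First I would record that $\CC[\NN A]_1 = \operatorname{span}_\CC\{z^{a_1},\dots,z^{a_n}\}$: every element of $\NN A$ of $\xi$-degree $1$ is a single column $a_j$ (a nonnegative integer combination $\sum c_j a_j$ has $\xi$-degree $\sum c_j$, which equals $1$ only when exactly one $c_j$ is $1$ and the rest vanish). Second, since $\varphi$ is a homogeneous automorphism, it restricts to a $\CC$-linear automorphism of each graded piece, in particular an automorphism of the finite-dimensional space $\CC[\NN A]_1$. Hence $\varphi(z^{a_j}) \in \CC[\NN A]_1$, so there are scalars $c_{j1},\dots,c_{jn} \in \CC$ with $\varphi(z^{a_j}) = \sum_{k=1}^n c_{jk}\, z^{a_k}$. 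Setting $p_j(w_1,\dots,w_n) = \sum_{k=1}^n c_{jk} w_k$ gives a linear homogeneous polynomial with $\varphi(z^{a_j}) = p_j(z^{a_1},\dots,z^{a_n})$, as desired.

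The only point that needs a little care — and the step I would flag as the main obstacle — is justifying that a homogeneous automorphism genuinely preserves the grading degree-by-degree as opposed to merely permuting or rescaling homogeneous components across degrees; one must observe that an automorphism fixes the irrelevant-type structure and cannot lower degrees (the generators of $\CC[\NN A]$ all sit in degree $1$, the unit in degree $0$, and $\varphi$ must send generators to elements that again generate, forcing the image to lie in degrees $\geq 1$; a symmetric argument applied to $\varphi^{-1}$ then pins the image in degree exactly $1$). Once this is in place the rest is immediate from finite-dimensionality of $\CC[\NN A]_1$ and linearity of $\varphi$ on it.
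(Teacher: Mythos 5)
Your proof is correct and follows essentially the same route as the paper: both arguments come down to showing that $\varphi(z^{a_j})$ is homogeneous of $\xi$-degree exactly one, using injectivity of $\varphi$ for the lower bound and the composition with $\varphi^{-1}$ (whose images of generators also have positive degree) for the upper bound. The paper phrases the degree count via the substitution $z \mapsto \lambda^\xi z$ rather than via the graded piece $\CC[\NN A]_1$, but this is only a cosmetic difference; the one step you flagged as delicate is precisely the step the paper also spends its effort on, and your sketch of it is sound.
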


\begin{proof}
Since $\varphi$ is an automorphism, it is injective, and hence $\varphi(z^{a_j})$ is nonconstant.
Then, since $\varphi$ is a homogeneous automorphism, $p_j$ must be a polynomial with vanishing constant term for each $j= 1, \dots, n$.

Let $q_m$, for $m= 1, \dots, n$, denote the corresponding polynomials
associated to the inverse $\varphi^{-1}$ of $\varphi$. 
Let $h(z) =  p_j(z^{a_1}, \dots, z^{a_n})$. Then, since $\<\xi, a_k\> = 1$ for all $k = 1, \dots, n$,
the degree of $p_j$ is equal to the degree of $h(\lambda^\xi z)$ in $\lambda$.
We have that
\[
 z^{a_j} = \varphi^{-1}(\varphi(z^{a_j})) = \varphi^{-1}(p_j(z^{a_1}, \dots, z^{a_n})).
\]
After the substitution $z \mapsto \lambda^\xi z$ we obtain in the left hand side a
polynomial of degree one in $\lambda$.
Since the polynomials $q_m$ all have positive degree, we obtain in the 
right hand side a polynomial in $\lambda$ of degree at least the
degree of $p_j$. Hence, the degree of $p_j$ is at most one, which implies that it
is equal to one.
\end{proof}

\begin{lemma}
\label{lem:autFromRational}
Every automorphism of\/ $\CC[\NN A]$ is induced from an automorphism of
the field of rational functions $\CC(z)$.
\end{lemma}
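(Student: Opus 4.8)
The plan is to show that any $\CC$-algebra automorphism $\varphi$ of $\CC[\NN A]$ extends uniquely to a $\CC$-algebra automorphism of the fraction field. First I would recall that $\CC[\NN A]$ is an integral domain (it is a subring of the Laurent polynomial ring $\CC[z_1^{\pm 1},\dots,z_d^{\pm 1}]$, since $z^{a_j}$ are units there), so it has a well-defined field of fractions $\operatorname{Frac}(\CC[\NN A])$. The key point is that this field of fractions is exactly $\CC(z_1,\dots,z_d) = \CC(z)$: indeed, by the first standing assumption in \S\ref{ssec:A-hypergeometric} the columns $a_1,\dots,a_n$ span $\ZZ^d$ as a lattice, so $\ZZ A = \ZZ^d$, and the group generated inside $\operatorname{Frac}(\CC[\NN A])$ by the monomials $z^{a_j}$ (and their inverses, which are ratios of elements of $\CC[\NN A]$) is all of $\{z^m \mid m \in \ZZ^d\}$; hence $\operatorname{Frac}(\CC[\NN A])$ contains $z_1,\dots,z_d$ and their inverses, so it contains $\CC[z^{\pm 1}]$, and conversely it is contained in $\operatorname{Frac}(\CC[z^{\pm1}]) = \CC(z)$. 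Therefore $\operatorname{Frac}(\CC[\NN A]) = \CC(z)$.

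Next I would invoke the universal property of localization: any injective $\CC$-algebra homomorphism from a domain $R$ to a field $K$ extends uniquely to a $\CC$-algebra homomorphism $\operatorname{Frac}(R) \to K$. Applying this to $\varphi\colon \CC[\NN A] \hookrightarrow \CC[\NN A] \subset \CC(z) = \operatorname{Frac}(\CC[\NN A])$, which is injective because $\varphi$ is an automorphism, yields a $\CC$-algebra homomorphism $\widetilde\varphi\colon \CC(z) \to \CC(z)$ restricting to $\varphi$ on $\CC[\NN A]$. Applying the same construction to $\varphi^{-1}$ gives $\widetilde{\varphi^{-1}}\colon \CC(z) \to \CC(z)$. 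The compositions $\widetilde\varphi \circ \widetilde{\varphi^{-1}}$ and $\widetilde{\varphi^{-1}} \circ \widetilde\varphi$ are $\CC$-algebra endomorphisms of $\CC(z)$ restricting to the identity on $\CC[\NN A]$; by the uniqueness clause in the universal property they must both equal the identity on $\CC(z)$. Hence $\widetilde\varphi$ is an automorphism of $\CC(z)$ inducing $\varphi$.

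I do not expect any serious obstacle here; the only point requiring a little care is the identification $\operatorname{Frac}(\CC[\NN A]) = \CC(z)$, which is where the lattice-spanning hypothesis on $A$ is used — without it one would only get the function field of the $d$-dimensional toric variety, which is still a purely transcendental extension of $\CC$ of degree $d$ but not literally presented in the coordinates $z_1,\dots,z_d$. One should also note that the induced rational self-map $\varphi\colon \CC^d \dashrightarrow \CC^d$ alluded to in the remark preceding the lemma is then simply the geometric incarnation of $\widetilde\varphi$, well-defined on a dense open subset, and it is in this sense that we "identify $\varphi \in \Aut(\CC[\NN A])$ with its induced rational map" in the sequel.
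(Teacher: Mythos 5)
Your proof is correct, and it rests on the same two pillars as the paper's: the hypothesis that the columns of $A$ span $\ZZ^d$ as a lattice (so that $z_1,\dots,z_d$ live in the fraction field of $\CC[\NN A]$, i.e.\ $\operatorname{Frac}(\CC[\NN A])=\CC(z)$), and the trick of running the construction for $\varphi^{-1}$ as well to get bijectivity. The difference is one of packaging: you invoke the universal property of localization to get an abstract extension $\widetilde\varphi$ plus a uniqueness clause, whereas the paper proceeds constructively, writing $e_i=\sum_j m_{ij}a_j$ and defining $\varphi(z_i)=\prod_j\varphi(z^{a_j})^{m_{ij}}$ explicitly, as in \eqref{eqn:autoExtension}. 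Your route is cleaner as a proof of the bare statement (and your uniqueness argument for why $\widetilde\varphi\circ\widetilde{\varphi^{-1}}=\mathrm{id}$ is tidier than the paper's terse ``apply the same procedure to the inverse''), but the explicit formula is not merely decorative here: it is quoted later in the proof of Theorem~\ref{thm:AutomorphismTransformations} to see that each $\varphi(z_i)$, and hence the toric Jacobian, is a rational function of $z$, and in \S\ref{sec:Symmetries} it specializes to $\varphi(z_i)=z^{t_i}$ for monomial automorphisms. So if you adopt the abstract version you should still record the formula $\widetilde\varphi(z_i)=\prod_j\varphi(z^{a_j})^{m_{ij}}$ as a corollary of uniqueness; your closing remark about the induced rational self-map of $\CC^d$ is exactly the right observation and correctly flags that this map is only a birational (not everywhere-defined) map.
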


\begin{proof}
Since the columns of $A$ span $\ZZ^{d}$ as a lattice, each of the
standard basis vectors $e_1, \dots, e_d$ has
a representation
\begin{equation}
\label{eqn:integerRepresentation}
e_i = \sum_{j=1}^n\,m_{ij}\,a_j, \quad i = 1, \dots, d,
\end{equation}
where $m_{ij} \in \ZZ$ for all $i= 1, \dots, d$ and $j = 1, \dots, n$.
Then, $\varphi$ extends to  
a map $\CC(z) \to \CC(z)$
by 
\begin{equation}
\label{eqn:autoExtension}
\varphi(z_i) = \prod_{j=1}^n\left(p_j(z^{a_1},\dots,z^{a_n})\right)^{m_{ij}}.
\end{equation}
To see that this is an automorphism of $\CC(z)$, apply the same
procedure to the inverse automorphism $\varphi^{-1}\colon\CC[\NN A]\to \CC[\NN A]$. 
\end{proof}

We remark that the converse of Lemma~\ref{lem:autFromRational} is not
true, as most automorphisms of $\CC(z)$, or of
$\CC[z^{\pm 1}]$, do not restrict to well defined maps $\CC[\NN
A]\to \CC[\NN A]$. 
For more information on automorphisms of $\CC[\NN A]$ we refer
to~\cite{BG99}, where a classification of homogeneous automorphisms of $\CC[\NN
A]$ is provided under the assumption that $\CC[\NN A]$ is normal.

\begin{proof}[Proof\/ of\/ Theorem \ref{thm:AutomorphismTransformations}]
Since $p_j$ for $j = 1, \dots, n$ are homogeneous linear forms the pullback
$\varphi^*$ defines a linear map $\varphi^*\colon \CC^A \rightarrow \CC^A$.
In particular, $g = \varphi^*(f) \in \CC^A$.

From \eqref{eqn:autoExtension} we can conclude that each $\varphi(z_i)$ is a
rational function of $z$ for $i = 1, \dots, d$. It follows that also the toric Jacobian $J_\varphi(z)$ 
is a rational function.
By use of the generalized binomial theorem,
under proper assumptions to ensure convergence,
we can expand the factor $\varphi(z)^{-\beta}J_\varphi(z)$ of the
integrand as a generalized Laurent series.
Each term of such a series is an $A$-hypergeometric function,
which finishes the proof.
\end{proof}

We have assumed that $\varphi$ restricts 
to a diffeomorphism of the cycle $\sigma$. Given an explicit automorphism, to
deduce a valid transformation, 
we must describe explicitly the cycle $\sigma$.
What sparked our investigation of the subject matter was not 
the realization provided by
Theorem \ref{thm:AutomorphismTransformations},
but the study of Euler-type $A$-hypergeometric integrals over explicit
cycles in \cite{NP} and \cite{BFP}.

\section{Linear algebraic transformations from polytope symmetries}
\label{sec:Symmetries}
Consider a matrix $A$ as in~\eqref{eq:A}, and a monomial
homogeneous automorphism $\varphi\colon \CC[\NN A] \rightarrow \CC[\NN A]$. 
That is, all polynomials $p_j$, for $j= 1, \dots, n$, are monomials. 
It follows from these assumptions and Lemma~\ref{lem:autFromRational} that
there exist vectors $t_i \in \ZZ^{d}$ such that
\[
 \varphi(z_i) = z^{t_i}, \text{ for } i = 1, \dots, d.
\]
Let us denote by $T$ the matrix $\left[t_1\,t_2\, \dots \,t_d\right]$.

The fact that $\varphi$ is a monomial homogeneous automorphism,
together with Lemma~\ref{lem:linearPolynomials},
implies that $\varphi(z^{a_j}) = z^{a_{\pi(j)}}$ for each $j=1, \dots, n$, where $\pi(j)\in \{1, \dots, n\}$.
By injectivity of $\varphi$, we conclude that $\pi \in \mathfrak{S}_n$ is a permutation of the 
columns of $A$. Let $P = P(\pi)$ denote the corresponding permutation matrix. Then,
\begin{equation}
\label{eqn:TAP}
TA = AP.
\end{equation}

The pair $(T, P)$ encodes a polytope symmetry of the Newton polytope
$\Delta_A = \conv(A)$, the convex hull of the columns of the matrix $A$. In general,
however, a polytope symmetry of $\Delta_A$ need not induce an automorphism of $\CC[\NN A]$.
If $A$ is \emph{saturated}, that is, if $\NN A = \RR_{+}A \cap \ZZ^{d}$, or
equivalently, if $\CC[\NN A]$ is normal, then any polytope symmetry does induce an element 
of $\Aut(\CC[\NN A])$.

\begin{corollary}
\label{cor:LinearTransformation}
 The monomial homogeneous automorphism $\varphi$ induces the transformation
 \begin{equation}
 \label{eqn:LinearTransformation}
  F_{\sigma}(\beta; x) = |T|\,F_\sigma(T\beta; xP),
 \end{equation}
 where $\sigma \simeq (\RR_+)^{d-1}\times S^1$ and $xP$ and $T\beta $ denotes the standard matrix multiplication.
\end{corollary}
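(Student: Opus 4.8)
The plan is to apply Theorem~\ref{thm:AutomorphismTransformations} to the monomial homogeneous automorphism $\varphi$, tracking carefully how each piece of the integrand transforms under the change of variables $z \mapsto \varphi(z)$. We already know from~\eqref{eqn:TAP} that $\varphi(z_i) = z^{t_i}$ and $\varphi(z^{a_j}) = z^{a_{\pi(j)}}$, so the pullback acts on the polynomial $f(z) = \sum_j x_j z^{a_j}$ by permuting the coefficients: $g(z) = \varphi^*(f)(z) = \sum_j x_j z^{a_{\pi(j)}} = \sum_j x_{\pi^{-1}(j)} z^{a_j}$, which is exactly the polynomial associated to the coefficient vector $xP$. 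Thus $g(z)^{\<\xi,\beta\>}$ contributes the factor $f_{xP}(z)^{\<\xi,\beta\>}$ on the right-hand side, matching the $f$-factor of $F_\sigma(T\beta; xP)$ since $\<\xi, T\beta\>$ equals $\<\xi,\beta\>$—this last equality I would check by noting that $\xi T A = \xi A P = (1,\dots,1)P = (1,\dots,1)$, so $\xi T$ is again the vector with $\xi T A = \mathbf{1}$; by uniqueness (or by the argument that $\<\xi T, a_j\> = 1$ for all $j$) we get $\<\xi, T\beta\> = \<\xi T, \beta\> = \<\xi,\beta\>$.

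Next I would handle the $z^{-\beta}$ factor: under $z_i \mapsto z^{t_i}$ we have $\varphi(z)^{-\beta} = \prod_i (z^{t_i})^{-\beta_i} = z^{-T\beta}$ (reading $T\beta$ as the vector $\sum_i \beta_i t_i$), which is precisely the corresponding factor in $F_\sigma(T\beta; xP)$. Finally the toric Jacobian: since $\varphi$ is the monomial map with exponent matrix $T$, a direct computation with the logarithmic differential forms shows $\varphi^*\!\left(\frac{dz_1}{z_1}\wedge\cdots\wedge\frac{dz_d}{z_d}\right) = \det(T)\,\frac{dz_1}{z_1}\wedge\cdots\wedge\frac{dz_d}{z_d}$, so the toric Jacobian $J_\varphi(z)$ is the constant $\det(T) = |T|$. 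Assembling these three computations into~\eqref{eqn:ChangeOfVariables} yields
\[
F_\sigma(\beta; x) = \int_\sigma \varphi(z)^{-\beta} g(z)^{\<\xi,\beta\>} J_\varphi(z)\, d\eta = |T| \int_\sigma z^{-T\beta} f_{xP}(z)^{\<\xi,\beta\>}\, d\eta = |T|\, F_\sigma(T\beta; xP),
\]
which is~\eqref{eqn:LinearTransformation}.

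The main obstacle is not the formal computation—that is routine once the pieces are identified—but rather justifying that the change of variables genuinely preserves the cycle $\sigma \simeq (\RR_+)^{d-1} \times S^1$ up to homotopy, as required by the hypotheses of Theorem~\ref{thm:AutomorphismTransformations}. A monomial automorphism with integer matrix $T$ is invertible over $\ZZ$ only if $|T| = \pm 1$, yet the statement allows $|T|$ as a genuine multiplicative constant, so $T$ need not be unimodular; one must argue that the induced self-map of the real torus-orbit, suitably lifted, sends the chosen orthant-type cycle to a homotopic one (possibly after absorbing orientation signs into $|T|$, explaining why the absolute value appears). I would address this by working in logarithmic coordinates, where $z \mapsto z^T$ becomes the linear map $T$ acting on $\RR^d$, and checking that $T$ maps the cone $(\RR_{>0})^{d-1} \times \RR$ (the log-image of $\sigma$) onto itself up to the appropriate homotopy—this is where the normality/saturation hypothesis on $A$ implicitly ensures the automorphism is well-behaved, and where I would need to be most careful. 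The convergence caveat from Theorem~\ref{thm:AutomorphismTransformations} is likewise inherited from the behavior of the integrand along the noncompact directions of $\sigma$, and since the change of variables is monomial it does not worsen the decay, so convergence of $F_\sigma(\beta;x)$ transfers to convergence of $F_\sigma(T\beta; xP)$.
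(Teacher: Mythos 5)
Your proposal is correct and follows essentially the same route as the paper: apply Theorem~\ref{thm:AutomorphismTransformations}, note that $TA=AP$ turns $\varphi^*(f)$ into the polynomial with coefficients $xP$, that $\varphi(z)^{-\beta}=z^{-T\beta}$, that the toric Jacobian is the constant $\det(T)$, and that the monomial map preserves the orthant-type cycle (the paper verifies this after dehomogenizing as in Remark~\ref{rem:dehomogenization}, whereas you work with the homogeneous cycle in logarithmic coordinates). Your worry about non-unimodular $T$ dissolves because $\varphi^{-1}$ is also a monomial homogeneous automorphism, so $T\in\mathrm{GL}_d(\ZZ)$ and $|T|=\pm1$; your explicit check that $\xi T=\xi$ (hence $\<\xi,T\beta\>=\<\xi,\beta\>$) is a detail the paper leaves implicit.
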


\begin{proof}
Let us first remark on the choice of cycle. Applying a linear transformation
we can write $A$ in the form \eqref{eqn:StandardA} with $m=1$. 
In the notation of Remark \ref{rem:dehomogenization}, we set
$\tau \simeq S^1$ to ensure convergence, and nonvanishing, of the constant \eqref{eqn:DehomogenizedConstant}.
The dehomogenized integral is taken over the cycle $\varepsilon = \RR^{d-1}$. 
Note that $\varphi$ restricts to monomial transformation in $d-1$ variables
of the dehomogenized integral, which we also denote by $\varphi$.

It is clear that $\varphi$ maps $(\RR_+)^{d-1}$ into itself. As $\varphi^{-1}$
is also a monomial automorphism, we find that $\varphi$ preserves
the positive orthant. The toric Jacobian $J_\varphi(z)$ is equal to the determinant $|T|$, 
which is nonvanishing since $\varphi$ is surjective. 
Furthermore, the identity $TA = AP$ implies that,
with the notation of Theorem~\ref{thm:AutomorphismTransformations},
\[
 g(z) = \sum_{j=1}^n(xP)_j \,z^{a_j}.
\]
Finally, that $\varphi(z)^\beta = z^{T\beta}$ is immediate.
Thus, the statement follows from Theorem \ref{thm:AutomorphismTransformations}.
\end{proof}

\begin{example}
\label{ex:2F1first}
Consider the point configuration
\[
A =
\begin{bmatrix}
1 & 1 & 1 & 1 \\
0 & 0 & 1 & 1 \\
0 & 1 & 0 & 1 
\end{bmatrix}.
\]
There is a group of eight transformations generated by the polytope symmetries
encoded by the pairs
 \[
T_1 = \left[\begin{smallmatrix}
 1	& 0 	& 0\\
 0 	& 0 	& 1 \\
 0 	& 1 	& 0 
\end{smallmatrix}\right], \quad
P_1 = \left[\begin{smallmatrix}
1 	& 0 	& 0 	& 0 \\ 
0	& 0 	& 1 	& 0 \\ 
0 	& 1 	& 0 	& 0 \\ 
0	& 0 	& 0	& 1 
\end{smallmatrix}\right]
\quad \text{and}\quad
T_2 = \left[\begin{smallmatrix}
 1	& \phantom{-}0 	& 0\\
 1 	&    -1 	& 0 \\
 0 	& \phantom{-}0 	& 1 
\end{smallmatrix}\right], \quad
P_2 = \left[\begin{smallmatrix}
0 	& 0 	& 1 	& 0 \\ 
0	& 0 	& 0 	& 1 \\ 
1 	& 0 	& 0 	& 0 \\ 
0	& 1 	& 0	& 0 
\end{smallmatrix}\right].
\]
The corresponding identities for the hypergeometric functions reads as
\begin{align*}
F_\sigma( \beta_1, \beta_2, \beta_3; x_1, x_2, x_3, x_4) & = F_\sigma(\beta_1, \beta_3, \beta_2; x_1, x_3, x_2, x_4)\\
F_\sigma(\beta_1, \beta_2, \beta_3; x_1, x_2, x_3, x_4) & = F_\sigma(\beta_1, \beta_1-\beta_2, \beta_3; x_3, x_4, x_1, x_2).
\end{align*}
Using a classical integral representation of Gauss hypergeometric function ${}_2F_1(a,b; c; x)$,
with $a =-\beta_2, b =-\beta_3, c =-\beta_1 ,$ and $x = 1-\frac{x_1x_4}{x_2x_3}$,
the first transformation translates to the 
in terms of series trivial identity
\begin{align*}
{}_2F_1\left(a,b;c; x\right) & = {}_2F_1\left(b,a;c; x\right),\\
\intertext{while the second transformation translates to the Pfaff transformation}
{}_2F_1\left(a,b;c; x\right) &= (1-x)^{-b} {}_2F_1\left(c-a, b; c; \frac{x}{x-1}\right).
\end{align*}
\end{example}

\begin{example}
\label{ex:LauricellaFC}
Suitably homogenized, the Lauricella hypergeometric function $F_C^{(m)}$ is a solution to the $A$-hypergeometric
system defined by the $(m+2)\times(2m+2)$-matrix
\[
A = \begin{bmatrix}
1 & \bf{1} & 1 &  \bf{1} \\
1 & \bf{1} & 0 &  \bf{0} \\
\bf{0} &  I_m & \bf{0} & -I_m\end{bmatrix},
\]
 where $I_m$ denotes the $m\times m$ identity matrix, and bold numbers are to be 
 interpreted as a vectors of appropriate dimensions. We can view 
 $\conv(A)$ as the convex hull of two $m$-simplices in $\RR^{m+2}$,
and find two families of transformations of $A$. The first corresponds
to a permutation of the vertices of 
the simplices: let $P$ be a permutation matrix of size $m\times m$, then
 \[
\left[\begin{smallmatrix}
 1		& 0 		& \bf{0}\\
 0 		& 1 		& \bf{0} \\
 \bf{0} 	& \bf{0} 	& P 
\end{smallmatrix}\right]
A
\left[\begin{smallmatrix}
1 		& \bf{0} 	& 0 		& \bf{0} \\ 
\bf{0} 	& P^{-1} 	& \bf{0} 	& \bf{0} \\ 
0 		& \bf{0} 	& 1 		& \bf{0} \\ 
\bf{0} 	& \bf{0} 	& \bf{0}	& P^{-1} 
\end{smallmatrix}\right]
= 
\left[\begin{smallmatrix}
1 	& \bf{1} 	& 1 		& \phantom{-} \bf{1} \\
1 	& \bf{1} 	& 0 		&  \phantom{-} \bf{0} \\
\bf{0}	&  P 		& \bf{0} 	& -P
\end{smallmatrix}\right]
\left[\begin{smallmatrix}
1 	& \bf{0} 	& 0 		& \bf{0} \\ 
\bf{0}	& P^{-1} 	& \bf{0} 	& \bf{0} \\ 
0 	& \bf{0} 	& 1 		& \bf{0} \\ 
\bf{0}	& \bf{0} 	& \bf{0} 	& P^{-1} 
\end{smallmatrix}\right]
= A.
 \]
The second family of transformations corresponds to swapping two vertices between the simplices:
 \begin{align*}
 \left[\begin{smallmatrix} 
1 	 & 0 		& \bf{0} 	& \phantom{-} 0         & \bf{0} \\ 
0 	 & 1 		& \bf{0}     	& -1 	        	& \bf{0} \\
\bf{0} & \bf{0} 	& I_{k-1} 	& \phantom{-}\bf{0}  	& \bf{0} \\ 
0 	 & 0 		& \bf{0} 	& -1 	        	& \bf{0}\\ 
\bf{0} & \bf{0} 	& \bf{0}  	& \phantom{-}\bf{0}	& I_{m-k}
\end{smallmatrix}\right]
A &  =  
\left[\begin{smallmatrix}
1 	 & \bf{1} 	& \phantom{-} 1 		& \bf{1} 	& 1 		& \bf{1} 	& 1 		& \bf{1}\\ 
1 	 & \bf{1} 	& \phantom{-} 0 		& \bf{1} 	& 0 		& \bf{0} 	& 1 		& \bf{0}\\ 
\bf{0} & I_{k-1} 	& \phantom{-}\bf{0}     	& \bf{0} 	& \bf{0} 	& I_{k-1} 	& \bf{0} 	& \bf{0}\\
0 	 & \bf{0} 	& -1 	                	& \bf{0} 	& 0 		& \bf{0} 	& 1 		& \bf{0}\\
\bf{0} & \bf{0} 	& \phantom{-}\bf{0} 	        & I_{m-k} 	& \bf{0} 	& \bf{0} 	& \bf{0} 	&-I_{m-k}
\end{smallmatrix}\right],
\end{align*}
which corresponds to transposing columns $k+1$ and $m+k+2$ of $A$. 
The two families of permutations commute, generating a group of $2^m\,m!$ linear transformations of $F^{(m)}_C$.
\end{example}

Let us end this section with a partial converse of Corollary \ref{cor:LinearTransformation}.
\begin{theorem}
Let $F(\beta; x)$ be an $A$-hypergeometric function for which there exists a transformation
valid for generic parameters $\beta$;
\begin{equation}
\label{eqn:LinearConverse}
  F(\beta; x) = K(\beta)\,F(T \beta ; xP),
\end{equation}
 where $P$ is a permutation matrix and $K(\beta)$ is a constant with respect to $x$. Then, $TA = AP$. That is, $P$ encodes a polytope symmetry
 of $A$.
\end{theorem}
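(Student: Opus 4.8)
The plan is to exploit the $A$-hypergeometric differential equations directly, using the fact that the transformation~\eqref{eqn:LinearConverse} forces a compatibility between the two systems $H_A(\beta)$ and $H_A(T\beta)$. First I would observe that $F(T\beta; xP)$, as a function of $x$, is annihilated by the differential operators obtained from the system $H_A(T\beta)$ after the substitution $x \mapsto xP$. Since $P$ is a permutation matrix, this substitution simply permutes the variables $x_j$; more importantly, it transforms the Euler (homogeneity) operators $\sum_{j} a_{ij} x_j \partial_j - \beta_i$ into $\sum_j (AP^{-1})_{ij} x_j \partial_j - (T\beta)_i$. On the other hand, multiplying by the $x$-constant $K(\beta)$ does not affect which differential operators in $x$ annihilate the function. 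Hence $F(\beta;x)$ is simultaneously a solution of $H_A(\beta)$ and of the system with Euler operators coming from the rows of $AP^{-1}$ and parameter $T\beta$.

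The key step is then to compare these two sets of first-order (Euler-type) operators acting on a common nonzero solution. For generic $\beta$, the solution space of $H_A(\beta)$ is known (e.g.\ by \cite{GKZ90}) to have dimension $\vol(A) \geq 1$, and crucially the span of the Euler operators that annihilate the \emph{entire} solution space is exactly the $d$-dimensional space of operators $\sum_j c_{ij} x_j \partial_j - \langle c_i, \beta\rangle$ where the rows $c_i$ range over the rowspan of $A$ — any first-order operator $\sum_j \ell_j x_j \partial_j - \mu$ killing all solutions must have $(\ell_1,\dots,\ell_n)$ in the rowspan of $A$ and $\mu$ the corresponding combination of $\beta$. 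Since the transformation is assumed valid for generic $\beta$ (hence along a Zariski-dense set, and the monodromy is irreducible so no proper subspace is preserved), the Euler operators $\sum_j (AP)_{ij}\,x_j\partial_j - (T\beta)_i$ — equivalently, writing $y=xP$, the operators built from rows of $A$ with parameter $T\beta$ — must annihilate all of $H_A(\beta)$'s solution space after the change of variables. Tracking the change of variables $x \mapsto xP$ carefully (so that $\partial/\partial x_j$ corresponds to $\partial/\partial y_{\pi(j)}$), this says that each row $(AP)_i = \sum_j a_{ij} e_{\pi^{-1}(j)}$ paired with $\beta$ must reproduce the row $(A)_i$ paired with $T\beta$, i.e.\ $AP$ and $A$ define the same system up to the reparametrization $\beta \mapsto T\beta$; matching the parameter dependence (valid for generic, hence all, $\beta$) gives exactly $AP = TA$.

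I expect the main obstacle to be making precise the claim that the transformation, valid only for \emph{generic} $\beta$ and only as an identity between two specific functions (not a priori between full solution spaces), actually forces the two $A$-hypergeometric \emph{systems} to coincide up to reparametrization. The cleanest route is: (i) differentiate~\eqref{eqn:LinearConverse} in $x$ to kill the scalar $K(\beta)$ and produce, for each $i$, a first-order operator $L_i := \sum_j (AP)_{ij} y_j \partial_{y_j} - (T\beta)_i$ annihilating $F(\beta; \cdot)$ viewed in the $y$-coordinates; (ii) since $F(\beta;\cdot)$ also solves the toric (box) operators $\partial^u - \partial^v$ for $Au = Av$ — which are invariant under the coordinate permutation $P$ — and since for generic $\beta$ these toric operators together with the true Euler operators of $H_A(\beta)$ generate a $D$-ideal whose solution space is irreducible of positive dimension, the operator $L_i$ must lie in that $D$-ideal; (iii) a first-order operator of the form $\sum_j \ell_j y_j\partial_j - \mu$ lies in an $A$-hypergeometric $D$-ideal only if it is a $\CC$-linear combination of the defining Euler operators, which pins down $(\ell_j)_j \in \operatorname{rowspan}(A)$ and $\mu$ accordingly. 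Reading off coefficients and using genericity of $\beta$ to separate the linear-in-$\beta$ parts yields $AP = TA$. Step (ii), justifying membership in the $D$-ideal from an identity valid only on a dense set of parameters and for a single solution, is where the irreducibility results \cite{GKZ90,Beu,sw-irred} cited in the introduction do the real work.
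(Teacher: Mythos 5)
Your proposal is correct and follows essentially the same route as the paper: both arguments hinge on the fact that, for generic $\beta$, any differential operator annihilating a single solution of $H_A(\beta)$ must lie in $H_A(\beta)$ itself (the paper cites \cite{Mat09} for this in one step, where you assemble it from irreducibility plus a direct analysis of first-order Euler-type operators), after which comparing the Euler operators of $H_A(\beta)$ and $H_{AP}(T\beta)$ and separating the $\beta$-linear parts yields $TA=AP$.
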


\begin{proof}
We recall from~\cite[Theorem 2.7 and Corollary~2.8]{Mat09} the fact that if $\beta$ is
sufficiently generic, and $F=F(x)$ is a solution of $H_A(\beta)$, then any
differential operator annihilating $F$ must belong to
$H_A(\beta)$. Using this result, the
equation~\eqref{eqn:LinearConverse} implies that $H_A(\beta) =
H_{AP}(T\beta)$.

Since the toric ideals underlying $H_A(\beta)$ and $H_{AP}(T\beta)$
coincide, we see that $A$ and $AP$ have the same integer kernel, and
therefore (since $P$ has full rank), the same rational
rowspan. 
Now considering the second set of equations from
Definition~\ref{def:AHypergeometric} for $H_A(\beta)$ and
$H_{AP}(T\beta)$, we conclude that $TA = AP$. 
\end{proof}

\section{Linear Algebraic Transformations from Elementary Automorphisms}
\label{sec:ElementaryAutomorphisms}

It follows from \cite{BG99} that in the case when $A$ is saturated all homogeneous 
automorphisms of $\NN A$ are given by the polytope symmetries considered in \S \ref{sec:Symmetries} and elementary
(toric) automorphisms which we consider in this section. These are generated by mappings
\[
z_i \mapsto z_i + t\,z^{a},
\]
where $t$ is a scalar and $a\in A$. 
These automorphisms also generate identities of hypergeometric functions. However,
in contrast to the situation in \S\ref{sec:Symmetries},
one must perform an expansion using the generalized binomial theorem,
as in the proof of Theorem \ref{thm:AutomorphismTransformations}.
This requires a specialization of $\beta$ to a family of hyperplanes in the parameter space.
To simplify the exposition, we deduce the corresponding
transformations in examples only.

\begin{example}
Consider the matrix
\[
A = \begin{bmatrix}
            1 & 1 & 1 \\
            0 & 1 & 2
      \end{bmatrix},
\]
which we consider to be in the form~\eqref{eqn:StandardA} with $m=1$.
Consider the automorphism of $\CC[\NN A]$ 
defined by $z_2 z_1 \mapsto z_2z_1 + z_1$, which
for the dehomogenized integral induces the change of variables $z \mapsto z + 1$. 
A cycle preserved under this transformation is $\sigma = \RR$,
and under the restriction that $f(z)$ is nonvanishing on $x$, the
integral~\eqref{eqn:IntegralRepresentation} converges for $\beta$ in
some open 
domain \cite{BFP}.  
We find that
\[
\int_\RR (x_1 + x_2 z + x_3 z^2)^{\beta_1} \,z^{-\beta_2}\,d\eta = \int_\RR\left((x_1 + x_2 + x_3) + (x_2 + 2 x_3) z +x_3 z^2\right)^{\beta_1}\,(z + 1)^{-\beta_2} \,d\eta.
\]
In order to obtain an identity of $A$-hypergeometric functions with shifted parameters, we need to expand
the binomial $(z+1)^{-\beta_2}$. 
Since the cycle is $\sigma = \RR$, this imposes the requirement that $\beta_2$ is integer.
For $\beta_2 = -N$, we obtain the identity
\[
F_\RR\left(x_1,x_2,x_3, \beta_1, -N\right) = \sum_{K=0}^{N-1}\,{N-1 \choose K}\,F_\RR\left(x_1+x_2+x_3, x_2+2x_3, x_3, \beta_1, 1-K\right).
\]
\end{example}

\begin{example}
Consider the matrix from Example \ref{ex:2F1first}. Then, all automorphisms
of $\CC[\NN A]$ are generated by the monomial automorphism of that example,
and the toric automorphism induced by $z_1 \mapsto z_1 + 1$
in the dehomogenized integral.
The latter gives the identity
\begin{align*}
& \int_\RR (x_1 + x_2 z_1 + x_3z_2 + x_4 z_1 z_2)^{\beta_1} \,z_1^{-\beta_2}z_2^{-\beta_3}\,d\eta\\
& \qquad\qquad= \int_\RR\left((x_1 + x_2) + x_2z_1 + (x_3 + x_4) z_2 +x_4z_1 z_2\right)^{\beta_1}\,(z_1 + 1)^{-\beta_2}z_2^{-\beta_3} \,d\eta,
\end{align*}
which yields an identity of $A$-hypergeometric integrals in the case when $\beta_2$ is a negative integer
using the same reasoning as in the previous example.
\end{example}

\section{On the absence of integral representations of Apell's $F_4$.}
\label{sec:AppellF4}

The standard form of an integral representation of classical hypergeometric functions
is as a dehomogenized Euler type integral~\eqref{eqn:IntegralRepresentation} over the positive
orthant $\RR_+^{d-1}$, with a coefficient which is a quotient of gamma functions in the parameters
$\beta$. Such expressions are known, e.g., for Gauss' hypergeometric function and 
Lauricella $F_D$.
However, such an expression is not known in the case of Apell's hypergeometric
function $F_4$, a special case of Lauricella $F_C$. In this section we
prove the following theorem, as an application of the results of \S
\ref{sec:Symmetries}. 

\begin{theorem}
\label{thm:AppellF4}
The Apell hypergeometric function $F_4$ does not admit any integral representation in the form of a dehomogenized
Euler type integral taken over a cycle $\sigma$ which is a rotation of the positive orthant.
\end{theorem}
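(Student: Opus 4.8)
The plan is to argue by contradiction, exploiting the rigidity of the linear transformation group attached to $F_4$ via its matrix $A$ and Example~\ref{ex:LauricellaFC}. First I would write down explicitly the $4 \times 6$ matrix $A$ realizing (homogenized) $F_4$ as the $m=2$ case of Lauricella $F_C^{(2)}$, so that $\conv(A)$ is the convex hull of two triangles (segments, really, once the homogenizing row is accounted for) in $\RR^4$. By Example~\ref{ex:LauricellaFC}, the group of polytope symmetries of $\Delta_A$ that lift to monomial homogeneous automorphisms of $\CC[\NN A]$ has order $2^2 \cdot 2! = 8$, and by Corollary~\ref{cor:LinearTransformation} each such symmetry $(T,P)$ yields a linear transformation $F_\sigma(\beta;x) = |T|\,F_\sigma(T\beta; xP)$ valid when $\sigma$ is (up to homotopy) $(\RR_+)^{d-1} \times S^1$, i.e. when the dehomogenized cycle is the positive orthant $\RR_+^{d-1}$.

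The key step is to run this in reverse. Suppose $F_4$ did admit a dehomogenized Euler-type integral representation over $\sigma$ a rotation $\rho(\RR_+^{d-1})$ of the positive orthant. The idea is that such a $\rho$ conjugates the orthant-preserving monomial automorphisms into automorphisms preserving $\rho(\RR_+^{d-1})$; combined with the converse theorem at the end of \S\ref{sec:Symmetries} (the one asserting that a transformation $F(\beta;x) = K(\beta) F(T\beta; xP)$ with $P$ a permutation matrix forces $TA = AP$), any additional transformation the rotated integral would have to satisfy must again come from a genuine polytope symmetry. So the strategy is: first, show that a representation of $F_4$ over any cycle at all must carry with it \emph{all} the transformations of $F_4$ coming from monodromy-compatible identities — in particular, the full symmetry group of the $A$-hypergeometric system — since $F_4$ as a solution of $H_A(\beta)$ for generic $\beta$ is pinned down by Mat09's result that its annihilator is exactly $H_A(\beta)$. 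Second, observe that the genuine symmetries of $\Delta_A$, realized as \emph{linear} maps $xP$ with $P$ a permutation of coordinates, act on $\CC^6$; for the rotated orthant $\rho(\RR_+^{d-1})$ to be invariant (up to homotopy) under the corresponding diffeomorphism $\varphi$ of the dehomogenized integral, $\rho$ must intertwine $\varphi$ with an orthant-preserving map. The contradiction will come from the fact that the symmetry group of $F_4$ contains elements — precisely the "swap a vertex between the two simplices" generators of Example~\ref{ex:LauricellaFC}, which involve a $T$ with a $-1$ on the diagonal — whose induced monomial maps $z_i \mapsto z^{t_i}$ do \emph{not} preserve the positive orthant (they invert a coordinate: $z_i \mapsto z_i^{-1}$ type behavior), hence cannot be conjugated into orthant-preserving maps by any rotation, since a rotation of $\RR_+^{d-1}$ is still a cone on which such an inversion-type monomial map is not a self-diffeomorphism.

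In more detail, the plan for the decisive computation is: identify which of the eight symmetries of $\Delta_A$ fail to preserve $\RR_+^{d-1}$ after dehomogenization — I expect the permutations of vertices within a simplex (the first family) give honest orthant-preserving monomial maps, while the vertex-swapping involutions (the second family, with the $-1$'s) give monomial maps that are \emph{not} orthant-preserving, because the corresponding $t_i$ has negative entries placing $z_i \mapsto$ a Laurent monomial with a true pole. Then I would show that if $F_4$ had a representation over $\sigma = \rho(\RR_+^{d-1})$, applying the change of variables dual to such a vertex-swap would, by Theorem~\ref{thm:AutomorphismTransformations} and the converse theorem, force $\sigma$ to be preserved by a map $\psi = \rho \circ \varphi \circ \rho^{-1}$; but $\psi$ must then be a diffeomorphism of $\RR_+^{d-1}$ extending to a rational self-map of the torus, and a rotation cannot turn a Laurent monomial map with a genuine pole along a coordinate hyperplane into one without, so no such $\rho$ exists.

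The main obstacle I anticipate is the first step: making precise, and proving, that \emph{any} dehomogenized Euler-type integral representing $F_4$ must inherit the transformation corresponding to each polytope symmetry. The subtlety is that Corollary~\ref{cor:LinearTransformation} goes one direction (symmetry $\Rightarrow$ transformation over the orthant) and the converse theorem goes the other (transformation with permutation matrix $\Rightarrow$ symmetry), but neither directly says that a representation over a \emph{different} cycle must satisfy the same identities. The resolution should be via the monodromy/irreducibility remarks in the introduction together with Mat09: for generic $\beta$, $F_4$ spans a one-dimensional subspace invariant under the relevant symmetry of $H_A(\beta)$, so the transformed integral $F_\sigma(T\beta; xP)$ must be a scalar multiple of $F_\sigma(\beta;x)$ regardless of $\sigma$ — but then the change of variables realizing this on the integrand forces the cycle constraint, and that is where the rotation hypothesis collapses. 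I would expect the bulk of the writing to go into pinning down this invariance argument cleanly and into the explicit identification of the non-orthant-preserving symmetry of the specific $F_4$ matrix.
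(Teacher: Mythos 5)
Your proposal does not reach a contradiction, and the mechanism you rely on is incorrect. You claim that the ``vertex-swapping'' symmetries of Example~\ref{ex:LauricellaFC}, whose matrices $T$ have negative entries, induce monomial maps that fail to preserve the positive orthant (or any rotation of it) because they involve Laurent monomials with poles. But a Laurent monomial map $z\mapsto z^{T}$ with integer exponents is a self-diffeomorphism of the open positive orthant: $z\mapsto 1/z$ maps $(0,\infty)$ to itself, and more generally such a map carries a rotated orthant $\operatorname{Arg}^{-1}(\theta)$ to the rotated orthant $\operatorname{Arg}^{-1}(T\theta)$, homotopic to the original away from $V(f)$. The paper uses exactly this in the proof of Corollary~\ref{cor:LinearTransformation} (``it is clear that $\varphi$ maps $(\RR_+)^{d-1}$ into itself''), and Example~\ref{ex:multivaluedness} applies $z\mapsto 1/z$ on $\RR_+$ to produce a valid transformation. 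So there is no geometric obstruction of the kind your ``decisive computation'' is designed to exhibit, and conjugation by a rotation is a red herring.

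The second, deeper problem is that the step you flag as the main obstacle --- that $F_4$ spans a one-dimensional subspace invariant under the relevant symmetry, so that the transformed integral must be a scalar multiple of the original --- is not merely unproved; it is precisely the \emph{false} statement whose falsity powers the actual proof. The paper argues in the opposite direction: it picks the symmetry $(T,P)$ realizing the argument map $(y_1,y_2)\mapsto(y_1/y_2,1/y_2)$ and observes that \emph{if} $F_4$ had a dehomogenized Euler integral over a rotated orthant, then Corollary~\ref{cor:LinearTransformation} would force a \emph{single-term} identity $F_4 = K_3(\beta)(-y_2)^{b}F_4(\,\cdots;y_1/y_2,1/y_2)$. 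The contradiction comes from an external classical input that your proposal never invokes: the \emph{two-term} connection formula \eqref{eqn:ApellTransformation}, which expresses $F_4$ under this argument map as a linear combination of two $F_4$'s that are linearly independent for generic $\beta$; a one-term identity would force those two functions to be linearly dependent, which is absurd. Without some analytic fact of this kind about what $F_4$ actually does under the transformation, the symmetry-group bookkeeping alone cannot rule anything out --- after all, ${}_2F_1$ and Lauricella $F_D$ admit orthant representations and happily satisfy the corresponding one-term (Pfaff-type) identities coming from the very same sort of polytope symmetry.
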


\begin{proof}
Apell's hypergeometric function $F_4$ can be realized as an 
$A$-hypergeometric function using the setup from
Example~\ref{ex:LauricellaFC}. More precisely, let $A$ be as in the case $m= 2$ of that example.
In its dehomogenized version, Apell's $F_4$ admits the series representation
\[
F_4(a,b;c,c'; y_1,y_2) = \sum_{r,s = 0}^{\infty} \frac{(a)_{r+s}(b)_{r+s}}{(c)_r(c')_s \,r!\, s!}\, y_1^r\, y_2^s
\]
Then, for $\kappa$ the transpose of $[-a, c-1, c'-1,-b,0,0]$, the function
\[
\frac{x_2^{c-1}x_3^{c'-1}}{x_1^a x_4^b} 
F_4\left(a,b; c,c'; \frac{x_2x_5}{x_1x_4}, \frac{x_3x_6}{x_1x_4} \right) 
\]
is $A$ hypergeometric of parameter $\beta = A \kappa$. 

Let us note that Apell's $F_4$ satisfies the transformation
\begin{align}
\label{eqn:ApellTransformation}
F_4(a,b; c,c'; y_1,y_2) = 
& \,K_1(\beta)(-y_2)^{-a}F_4\left(a,a-c'+1;c,a-b+1; \frac{y_1}{y_2},\frac{1}{y_2}\right) \\
\nonumber& + K_2(\beta) (-y_2)^{b}F_4\left(b-c'+1, b ;c,b-a+1; \frac{y_1}{y_2},\frac{1}{y_2}\right) 
\end{align}
where $K_1$  and $K_2$ are certain quotients of $\Gamma$ functions in
the parameters $\beta = A\kappa$, see \cite[\S 5.11]{EMOT53}.
We consider the polytope symmetry of $A$ encoded by
\[
T = \left[\begin{smallmatrix}
 \phantom{-}1 & 0 & \phantom{-}0 & \phantom{-}0 \\
 \phantom{-}0 & 1 & \phantom{-}0 & \phantom{-}0 \\
 \phantom{-}0 & 0 & \phantom{-}1 & \phantom{-}0 \\
 -1 & 2 & -1 & -1
\end{smallmatrix}\right] 
\quad \text{and}\quad
P = \left[\begin{smallmatrix}
0 	& 0 	& 1 	& 0 & 0 & 0  \\ 
0	& 1 	& 0 	& 0 & 0 & 0 \\ 
1 	& 0 	& 0 	& 0 & 0 & 0 \\ 
0	& 0 	& 0	& 0 & 0 & 1\\
0 & 0 & 0 & 0 & 1 & 0 \\
0 & 0 & 0 & 1 & 0 & 0
\end{smallmatrix}\right].
\]
Assume that $F_4$ admits an integral representation as a dehomogenized version of 
the Euler type integral \eqref{eqn:IntegralRepresentation} 
taken over some cycle $\sigma$ which is a rotation of the positive orthant.
It then follows from Corollary~\ref{cor:LinearTransformation}, using $T$ and $P$ as above,
that $F_4$ satisfies, for generic parameters, an identity of the form 
\[
F_4(a,b; c,c'; y_1,y_2) = K_3(\beta) (-y_2)^{b}F_4\left(b-c'+1, b ;c,b-a+1; \frac{y_1}{y_2},\frac{1}{y_2}\right).
\]
However, this contradicts the validity of the transformation \eqref{eqn:ApellTransformation}.
Indeed, composing the two transformations, we find that the two functions
appearing in the right hand side of \eqref{eqn:ApellTransformation} are linearly dependent
for generic parameters $\beta$; this is absurd.
\end{proof}

\raggedbottom
\def\cprime{$'$} \def\cprime{$'$}
\providecommand{\MR}{\relax\ifhmode\unskip\space\fi MR }
\providecommand{\MRhref}[2]{%
  \href{http://www.ams.org/mathscinet-getitem?mr=#1}{#2}
}
\providecommand{\href}[2]{#2}
\end{document}